\newcolumntype{C}{>{\centering}X} %
\theoremstyle{plain} %
\newtheorem{theo}{Theorem}[section]
\newtheorem{propo}[theo]{Proposition}
\newtheorem{lemma}[theo]{Lemma}
\theoremstyle{definition}
\newtheorem{defi}[theo]{Definition}
\newtheorem{remark}[theo]{Remark}
\newcommand{\CO}{{\mathcal{O}}} 
 \newcommand{\CH}{{\mathcal{H}}}
 \newcommand{\CA}{{\mathcal{A}}}
\newcommand{\BN}{{\mathbb{N}}}
\newcommand{\BC}{{\mathbb{C}}}
\newcommand{\fp}{{\mathfrak{p}}}
\newcommand{\fq}{{\mathfrak{q}}}
\newcommand{\FD}{{\mathfrak{D}}}
\def\hlinewd#1{%
\noalign{\ifnum0=`}\fi\hrule \@height #1 %
\futurelet\reserved@a\@xhline}
\newcommand{\qg}{{\backslash}} %
\newcommand{\Ubr}{{U_{\mathrm{branch}}}}
\newcommand{\Vram}{{V_{\mathrm{ram}}}}
\newcommand{\tq}{{~|~}}
\newcommand{\ie}{{\emph{i.e.}~}}
\newcommand{\surj}{{\twoheadrightarrow}}
\newcommand{\Tr}{\operatorname{Tr}\nolimits}
\newcommand{\GL}{\operatorname{GL}\nolimits}
\newcommand{\Spec}{\operatorname{Spec}\nolimits}
\renewcommand{\det}{\operatorname{det}\nolimits}
\newcommand{\Jac}{\operatorname{Jac}\nolimits}
\newcommand{\LL}{\operatorname{LL}\nolimits}
\newcommand{\Spram}{\operatorname{Spec_1^{\mathrm{ram}}}\nolimits}
\newcommand{\lcm}{\operatorname{lcm}\nolimits}
\newcommand{\grdim}{\operatorname{grdim}\nolimits}
\begin{document}

\title[Discriminants and Jacobians of virtual reflection
groups]{Discriminants and Jacobians \\of virtual reflection groups}

\author{Vivien Ripoll}
 \address{LaCIM, UQÀM, CP 8888, Succ. Centre-ville Montréal, QC, H3C
  3P8, Canada}
\email{vivien.ripoll@lacim.ca}

\begin{abstract}
  Let $A$ be a polynomial algebra with complex coefficients. Let $B$
  be a finite extension ring of $A$ which is also a polynomial
  algebra. We describe the factorisation of the Jacobian $J$ of the
  extension into irreducibles. We also introduce the notion of a
  well-ramified extension and define its discriminant polynomial
  $D$. In the particular case where $A$ is the ring of invariants of
  $B$ under the action of a group (\emph{i.e.}, a Galois extension), this
  framework corresponds to the classical invariant theory of complex
  reflection groups. In the more general case of a well-ramified
  extension, we explain how the pair $(D,J)$ behaves similarly to a
  Galois extension. This work can be viewed as the first step towards a
  possible invariant theory of ``virtual reflection groups''.
\end{abstract}

\maketitle

\section*{Introduction}

This note describes a non-Galois version of the first few steps of the
classical invariant theory of reflection groups. We will deal with
quite basic questions of commutative algebra, that were at first
motivated by empirical observations on the extensions defined by
Lyashko-Looijenga morphisms.

We consider a finite ring extension $A\subseteq B$, where $A$ and $B$
are both polynomial algebras in~$n$ variables. In the case when $A$ is
the ring of invariants of $B$ under the action of a group~$G$, this
implies that $G$ is generated by reflections
(Chevalley-Shephard-Todd's theorem) and many properties are known in
this setting. Here we do not suppose that $A$ has the form
$B^G$. Thus, we cannot simply imitate the classical proofs of
invariant theory, as they really make use of the group
action. However, in our setting, many properties seem to work the same
way as for Galois extensions, particularly for Jacobian and
``discriminant'' of the extension.

\bigskip

Note that we use only elementary commutative algebra, and that the
properties derived here are presumably folklore. The situation that we
describe is in fact surprisingly basic and universal, yet apparently
written nowhere from this perspective. The extensions usually studied in the
litterature are either much more general, or of the form $A=B^G \subseteq
B$, where~$B$ is a polynomial algebra; here we are rather interested in
extensions $A\subseteq B$ where~$B$ \emph{and}~$A$ are polynomial algebras,
but where we \emph{do not require} $A$ to be the ring of invariants of~$B$
under a group action.

\bigskip

The key ingredients to describe and understand the situation are:
\begin{itemize}
\item a notion of \emph{``well-ramified''} polynomial extensions;
\item properties of the \emph{different} of an extension, that enable to
  apprehend the Jacobian of the extension.
\end{itemize}

\subsection*{Motivations} 
Let $V$ be an $n$-dimensional complex vector space, and $W\subseteq
\GL(V)$ a finite complex reflection group, with fundamental system of
invariants $f_1, \dots, f_n$ of degrees $d_1 \leq \dots \leq
d_n$. From Chevalley-Shephard-Todd's theorem, we have the equality
$\BC[V]^W=\BC[f_1,\dots, f_n]$, and the isomorphism
\[ \begin{array}{lcl}
  W \qg V & \to & \BC^n \\
\bar{v} &\mapsto& (f_1(v),\dots, f_n(v)).
\end{array}\]

Let us denote by $\CA$ the set of all reflecting hyperplanes, and consider the
discriminant of $W$ defined by
\[ \Delta_W := \prod_{H\in \CA} \alpha_H^{e_H} \ ,\] where $\alpha_H$ is an
equation of $H$ and $e_H$ is the order of the parabolic subgroup $W_H$. The
discriminant is the equation of the hypersurface $\displaystyle{\CH:=W \qg \bigcup_{H\in
  \CA}H}$ in $\BC^n=\Spec \BC[f_1,\dots, f_n]$.

Let us also consider the Jacobian $J_W$ of the morphism $(v_1,\dots, v_n)
\mapsto (f_1(v),\dots, f_n(v))$:
\[ J_W := \det \left( \frac{\partial f_i}{\partial v_j}
\right)_{\substack{1 \leq i\leq n\\1\leq j \leq n}} .\] It is well known
(see for example \cite[Sect.\ 21]{Kane}) that the Jacobian satisfies the
following factorisation:
\[ J_W \doteq \prod_{H\in \CA} \alpha_H^{e_H -1}, \] where $\doteq$
denotes equality up to a nonzero scalar. Thus, we get \[\Delta_W / J_W
= \prod_{H\in \CA} \alpha_H \ ,\] \ie this quotient is the product of
the ramified polynomial of the extension $\BC[f_1,\dots, f_n]
\subseteq \BC[V]$.

\bigskip

A stunningly similar situation has came up recently, in the geometric
study of the Lyashko-Looijenga morphisms ($\LL$) associated to complex
reflection groups. These morphisms have been introduced by Bessis in
his work about the $K(\pi,1)$ property for finite complex reflection
arrangements \cite{BessisKPi1}; see also \cite{Ripollfacto} about the
relation with the combinatorics of factorisations of Coxeter
elements. The Lyashko-Looijenga morphism associated to a rank $n$
reflection group has the following form:
\[ \begin{array}{rccc}
\LL & \BC^{n-1} & \to & \BC^{n-1}\\
& X=(x_1,\dots, x_n) & \mapsto & (a_2(X),\dots, a_n(X))\ ,
\end{array}
\]
where $a_2,\dots, a_n$ are polynomials in $x_1,\dots, x_{n-1}$,
constructed from the geometry of the reflection group. We refer to
\cite[Sect.\ 5]{BessisKPi1} or \cite[Sect.\ 3]{Ripollfacto} for the
precise definition (which is not necessary in this note).

Associated to the morphism $\LL$, we can naturally define the
$\LL$-discriminant $D_{\LL}$ (it is the equation of the bifurcation
locus $\LL$), and the $\LL$-Jacobian $J_{\LL}$ (the Jacobian
determinant of the morphism). It turns out that the pair of
polynomials $(J_{\LL}, D_{\LL})$ behaves similarly to the pair~$(J_W,
\Delta_W)$ defined earlier: the quotient $D_{\LL}/J_{\LL}$ is the
product of the ramified polynomials of the extension associated to
$\LL$, and moreover their valuations in $D_{\LL}$ correspond to their
ramification indices.

\subsection*{General setting and main result}

Although we first had in mind applications to the Lyashko-Looijenga
context, it seems that a more general setting has its interest
itself. Thus, this note is devoted to the following general setup. Let
us consider a \emph{finite graded polynomial extension}~${A\subseteq
  B}$ (see Def.\ \ref{defext}): we have a graded polynomial
  algebra $B$ in $n$ indeterminates over $\BC$, and a polynomial
  subalgebra $A$ generated by $n$ weighted homogeneous elements of
  $B$, such that the extension is finite. The two key examples are the
  ones already mentioned:
\begin{itemize}
\item the Galois extensions $\BC[f_1,\dots, f_n]\subseteq
  \BC[v_1,\dots, v_n]$, defined by a morphism ${V \to W \qg V}$, where
  $w$ is a reflection group and $\BC[f_1,\dots, f_n]=\BC[V]^W$;
\item the Lyashko-Looijenga extensions $\BC[a_2,\dots, a_n]\subseteq
  \BC[x_1,\dots, x_{n-1}]$, given by a morphism $\LL$; these
  extensions are indeed finite according to \cite[Thm.\ 5.3]{BessisKPi1}.
\end{itemize}

In the first section we give the precise definitions, and use the notion of
\emph{different ideal} of an extension to describe a factorisation of the
Jacobian. In section \ref{partgeom}, about the geometry of such extensions,
we recall the relations between the ramification locus and the branch locus
of a branched covering. In section \ref{partwellram} we define the
\emph{well-ramified property} for a finite graded polynomial extension
(Def.\ \ref{defwellram}), and we give several characterisations of this
property (Prop.\ \ref{propwell}): this is a slightly weaker property than
the normality of the extension, and is also equivalent to the equality
between the preimage of the branch locus and the ramification locus.

\medskip

The main result of this chapter is:

\begin{theo}
  \label{thmintrojac}
  Let $\underline{W}=(A \subseteq B)$ be a finite graded polynomial
  extension. Then the Jacobian $J$ of the extension verifies:
  \[ J \doteq \prod_{Q \in \Spram(B)} Q^{e_Q-1}\] where $\Spram(B)$ is
  the set of ramified polynomials in $B$ (up to association), and the
  $e_Q$ are the associated ramification indices.

  Moreover, if the extension $\underline{W}$ is \emph{well-ramified}
  (according to Def.\ \ref{defwellram}), then:
  \[ (J)\cap A = \left( \prod_{Q \in \Spram(B)} Q^{e_Q} \right) \qquad
  \text{(as an ideal of } A\text{).}\]
\end{theo}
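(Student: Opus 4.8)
The plan is to treat the two statements separately but with a common engine, namely the \emph{different ideal} $\mathfrak{D}_{B/A}$ and its relation to the Jacobian. For the first (general) factorisation, the key point is that for a finite graded polynomial extension of polynomial $\BC$-algebras, $B$ is automatically a free $A$-module (graded Nakayama, since $A$ is polynomial hence the extension is generically étale in characteristic $0$), so the extension is flat and generically separable. Under these hypotheses the classical identity $\mathfrak{D}_{B/A} = (J)$ holds: the Jacobian generates the different, because $B = A[v_1,\dots,v_n]$ is a complete intersection over $A$ (presented by the relations expressing the generators of $A$ as polynomials in the $v_j$), and for such extensions the Noether different, the Kähler different and the homological different all coincide and are generated by the determinant of the Jacobian matrix. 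So the first step is: invoke (or recall from Section~1) that $(J) = \mathfrak{D}_{B/A}$. The second step is a height-one / localisation argument: since $B$ is a UFD (a polynomial ring), the ideal $(J)$ is determined by its behaviour at height-one primes $\fp = (Q)$ of $B$. Localising at such a prime, $B_\fp$ is a DVR dominating the DVR $A_{\fp \cap A}$ (using that $A$ is also a UFD and normal, and the extension is finite), and for an extension of DVRs in the tamely/wildly ramified case the valuation of the different is $e_Q - 1$ when the residue extension is separable — which it is here, being an extension of fields containing $\BC$, hence of characteristic $0$, so tameness is automatic. One must check that $\fp$ contributes iff it is a ramified prime, i.e. iff $e_Q > 1$, and that unramified height-one primes contribute $0$; this is exactly the content of $v_\fp(\mathfrak{D}_{B/A}) = e_Q - 1$. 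Assembling over all height-one primes and using that $J$ is, up to scalar, $\prod Q^{v_{(Q)}(J)}$ (as $B$ is factorial and $J \ne 0$) gives $J \doteq \prod_{Q \in \Spram(B)} Q^{e_Q - 1}$.

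For the second (well-ramified) statement, the plan is to compute the contraction $(J) \cap A$ ideal-theoretically. We have $J \doteq \prod_Q Q^{e_Q - 1}$, so $(J)\cap A = \bigl(\prod_Q Q^{e_Q-1}\bigr) B \cap A$, and the claim is that this equals the ideal of $A$ generated by $\prod_Q Q^{e_Q}$ — note $\prod_Q Q^{e_Q}$ need not lie in $A$ individually, so the statement is really that the \emph{ideal} $\bigl(\prod Q^{e_Q}\bigr)$ of $A$ (meaning: generated by those products $\prod Q^{e_Q}$ that happen to be $A$-elements, or equivalently the contraction described via the discriminant polynomial $D$ of Section~\ref{partwellram}) coincides with $(J)\cap A$. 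The natural route: by the characterisations in Prop.~\ref{propwell}, the well-ramified hypothesis gives that the preimage of the branch locus equals the ramification locus, which translated to polynomials says that each ramified $Q$ divides (the pullback to $B$ of) the discriminant $D \in A$, and more precisely that $D \cdot B = \prod_Q Q^{e_Q} \cdot (\text{unit})$, i.e. the $\LL$-type relation $D = \prod Q^{e_Q}$ holds in $B$ up to scalar. Granting that, $(D) = \bigl(\prod Q^{e_Q}\bigr)$ as an ideal of $A$, and one must show $(J)\cap A = (D)$.

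To get $(J)\cap A = (D)$: the inclusion $(D) \subseteq (J)\cap A$ is the easy direction — $D \doteq \prod Q^{e_Q}$ is visibly divisible in $B$ by $J \doteq \prod Q^{e_Q-1}$ (the quotient is $\prod Q$, the product of ramified polynomials), and $D \in A$, so $D \in (J)\cap A$. The reverse inclusion $(J)\cap A \subseteq (D)$ is the substantive one: given $a \in A$ with $a = J b$ for some $b \in B$, I want $a \in (D)_A$. Write $b \doteq \prod Q^{m_Q} \cdot b'$ with $b'$ prime to every ramified $Q$; then $a \doteq \prod Q^{e_Q - 1 + m_Q} b'$. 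Since $a \in A$ and $A$ is normal (integrally closed), its divisor in $B$ must be Galois-stable in an appropriate weak sense — here is where well-ramifiedness does the real work: the hypothesis forces the ramification structure over each branch component to be ``uniform'', so that for $a \in A$ divisible in $B$ by some $Q$ it is in fact divisible by the full $\prod Q^{e_Q}$-packet, pushing $m_Q \ge 1$ for all ramified $Q$ appearing, whence $a/D \in B$ and, being a ratio of two elements of the normal domain $A$ with $a/D \in B \cap \operatorname{Frac}(A) = A$, lies in $A$. The main obstacle I anticipate is precisely this last uniformity step: without a group action one cannot simply average, so one has to extract from Def.~\ref{defwellram} (or from the branch-locus characterisation in Prop.~\ref{propwell}) the combinatorial input that all height-one primes of $B$ lying over a given height-one prime of $A$ enter the relevant divisors with matching multiplicities — equivalently that $B/DB$, or the ramification divisor, is ``pulled back'' from $A$ in the strong sense needed. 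Establishing that contraction-compatibility is where the well-ramified hypothesis must be used in an essential, non-formal way, and I expect it to be the crux of the argument.
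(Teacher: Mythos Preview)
Your treatment of the first part is essentially the paper's own argument: identify the different $\FD_{B/A}$ with the Jacobian ideal (the paper does this via Broer's identification of the K\"ahler different with the different; your complete-intersection presentation is the same mechanism), then localise at height-one primes to reduce to an extension of DVRs, where tameness in characteristic~$0$ gives $v_Q(\theta_{B/A})=e_Q-1$ (the paper cites Serre, \emph{Corps locaux}). Nothing to add there.

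For the second part, however, you are doing far too much work, and in fact working from a misconception. Look at Definition~\ref{defwellram}: the extension is \emph{called} well-ramified precisely when $(J)\cap A=\bigl(\prod_{Q\in\Spram(B)}Q^{e_Q}\bigr)$. The paper itself says, just after the definition, that this makes the second part of the theorem a tautology. So there is nothing to prove. Your elaborate contraction argument, the inclusion $(D)\subseteq(J)\cap A$, the reverse inclusion via normality and ``uniformity'', the worry that $\prod Q^{e_Q}$ might not lie in $A$ --- all of this is aimed at a target that has already been hit by fiat.

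What you have sketched is, in spirit, part of the content of Proposition~\ref{propwell}, which lists several equivalent characterisations of the well-ramified property (e.g.\ $\prod Q^{e_Q}\in A$; every prime over a ramified prime is itself ramified; $f^{-1}(\Ubr)=\Vram$). If the definition of well-ramified had been one of \emph{those} conditions, then your argument would be the right shape for proving the implication to $(J)\cap A=(D)$ --- and indeed the paper's proof of Proposition~\ref{propwell} contains a cleaner version of the divisor bookkeeping you outline (it shows directly that $(J)\cap A=(S)\cap A$ always, where $S=\prod_{Q\in\Spram(B)}Q$, and that this equals $(R)=(\prod Q^{e_Q})$ iff $R\in A$). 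But as a proof of Theorem~\ref{thmintrojac} as stated, your second half should simply read: ``immediate from Definition~\ref{defwellram}''.
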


\begin{remark}
  The first part of Thm.\ \ref{thmintrojac} is a rather easy consequence of
  commutative algebra properties. It is probably folklore, but I could not
  find the formula stated anywhere.
\end{remark}

\begin{remark}
  The notation $\underline{W}$ for the extension is intentionally
  chosen to emphasize the analogy with the case when the extension is
  Galois, \ie when it is given by the action of a reflection group $W$
  on the polynomial algebra $B$. Here there is not necessarily a group
  acting, but some features of the Galois case remain. That is why
  David Bessis proposed to call the extension $\underline{W}$ a
  \emph{virtual reflection group}\footnote{David Bessis, personal
    communication.}. The polynomial $\prod Q^{e_Q}$ in the theorem
  above could then be called the \emph{discriminant}%
  of the virtual reflection group $W$. One can wonder whether the
  analogies can go further, and to what extent it is possible to
  construct an ``invariant theory'' for virtual reflection groups.
 \end{remark}

\section{Jacobian and different of a finite graded polynomial
  extension}

\subsection{General setting and notations}
~\\
\label{subpartsetting}
Let $n$ be a positive integer, and denote by $B$ the graded polynomial
algebra $\BC[X_1,\dots, X_n]$, where $X_1,\dots, X_n$ are indeterminates of
respective weights $b_1,\dots, b_n$.

Let us consider $n$ weighted homogeneous polynomials $f_1,\dots, f_n$ in $B$ (of
respective weights $a_1,\dots,a_n$), and the resulting
(quasi-homogenenous) mapping  
\[\begin{array}{lccc}
    f : & \BC^{n} & \to & \BC^{n}\\
& (x_1,\dots, x_n) & \mapsto & (f_1(x_1,\dots, x_n),\dots, f_n(x_1,\dots,
x_n)).
  \end{array} \]

We denote by $A$ the
algebra $\BC[f_1,\dots, f_n]$, so that we have a ring extension $A\subseteq B$.

\begin{defi}
\label{defext}
In the above situation, if $B$ is an $A$-module of finite type, we will
call~${A\subseteq B}$ a \emph{finite graded polynomial extension}.
\end{defi}

\begin{remark}
  In this setting, the extension is finite if and only if
  $f^{-1}(\{0\})=\{0\}$, because~$f$ is a quasi-homogeneous morphism
  (see for example \cite[Thm.5.1.5]{LZgraphs}). Moreover, the rank of
  $B$ over~$A$ (or the \emph{degree} of $f$) is then equal to
  $r:=\prod a_i / \prod b_i$.
\end{remark}

\bigskip

The algebra $B$ is Cohen-Macaulay, and is finite as an $A$-module, so $B$
is also a free~$A$-module of finite type.  Thus $A\subseteq B$ is a finite
free extension of UFDs. We denote by~$K$ and~$L$ the fields of fractions of
$A$ and $B$. Let us recall some notations and properties about the
ramification in this context (for example see \cite[Chap.\ 3]{Benson}).

\medskip

If $\fq$ is a prime ideal of $B$, then $\fp=\fq \cap A$ is a prime ideal of
$A$. In this situation we say that $\fq$ lies \emph{over} $\fp$. By the
Cohen-Macaulay theorem \cite[Thm.1.4.4]{Benson}, $\fq$ has height one if
and only if~$\fp$ has height one. In this case, we can write $\fq=(Q)$ and
$\fp=(P)$, where $P\in A$ and $Q \in B$ are irreducible, and $(Q)\cap
A=(P)$.

The ramification index of $\fq$ over $\fp$ is:
\[ e(\fq,\fp):= v_Q (P)\]
(we will rather write simply $e_\fq$ or $e_Q$).

\medskip

We denote by $\Spec_1(B)$ (resp. $\Spec_1(A)$) the set of prime ideals of
$B$ (resp. $A$) of height one. It is also the set of irreducible
polynomials in $B$, up to association. By abusing the notation, in indices
of products, we will write ``$Q \in \Spec_1(B)$'' instead of ``$\fq\in
\Spec_1(B)$ and~$Q$ is one polynomial representing $\fq$'', and ``$Q$ over
$P$'' instead of ``$(Q)$ over $(P)$''.

We recall the following elementary property:

\begin{lemma}
  Let $(P) \in \Spec_1(A)$ and $(Q)\in\Spec_1(B)$. Then, $(Q)$ lies
  over $(P)$ if and only if $Q$ divides $P$ in
  $B$. Thus, for $(P)\in\Spec_1(A)$, we have:
  \[ P \doteq \prod_{Q \text{\ over\ } P} Q^{e_Q} \ .\]
\end{lemma}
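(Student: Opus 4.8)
The plan is to prove the lemma by unwinding the definitions of ``lying over'' and divisibility, then using the resulting characterization together with the ramification indices to recover the factorization of $P$ in $B$. First I would recall that since $B$ is a UFD and $A = \BC[f_1,\dots,f_n]\subseteq B$ is an extension of UFDs, an irreducible $P\in A$ factors in $B$ as a product of powers of finitely many irreducibles $Q_1,\dots,Q_k\in B$ (pairwise non-associate), and this is the prime factorization in $B$. The key observation is that $Q$ divides $P$ in $B$ if and only if $Q\in\{Q_1,\dots,Q_k\}$, i.e.\ if and only if $P\in(Q)$, i.e.\ if and only if $P\in(Q)\cap A$.

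For the equivalence itself: suppose $(Q)$ lies over $(P)$, i.e.\ $(Q)\cap A = (P)$ (both height one by the Cohen-Macaulay theorem quoted in the excerpt). Then $P\in(Q)\cap A\subseteq(Q)$, so $Q\mid P$ in $B$. Conversely, suppose $Q\mid P$ in $B$. Then $P\in(Q)$, so $P\in(Q)\cap A$. Now $(Q)\cap A$ is a prime ideal of $A$ (contraction of a prime), and it is nonzero since it contains $P\ne0$; I would argue it has height one, either by the Cohen-Macaulay/going-down correspondence already invoked (a height-one prime of $B$ contracts to a height-one prime of $A$), so that $(Q)\cap A = (P')$ for some irreducible $P'\in A$. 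Since $P\in(P')$ and $P$ is irreducible in the UFD $A$, we get $(P) = (P')= (Q)\cap A$, which is exactly the statement that $(Q)$ lies over $(P)$. This also shows that each irreducible factor $Q_i$ of $P$ in $B$ satisfies $(Q_i)$ over $(P)$, and conversely every $Q$ over $P$ occurs among the $Q_i$ (since it divides $P$); hence the set $\{Q \text{ over } P\}$ is precisely the set of irreducible divisors of $P$ in $B$.

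For the displayed formula: writing the prime factorization $P \doteq \prod_{i=1}^k Q_i^{m_i}$ in $B$, the multiplicity $m_i$ is by definition $v_{Q_i}(P)$, which is exactly the ramification index $e_{Q_i} = e((Q_i),(P))$ introduced just above the lemma. Combining this with the identification of $\{Q\text{ over }P\}$ with $\{Q_1,\dots,Q_k\}$ gives
\[ P \doteq \prod_{Q\text{ over }P} Q^{e_Q}. \]

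I do not expect any genuine obstacle here; the statement is essentially a bookkeeping exercise translating between ideal-theoretic language (``lies over'') and element-theoretic language (``divides''), with the only mildly non-trivial input being the Cohen-Macaulay height-correspondence (already cited as \cite[Thm.1.4.4]{Benson}) needed to know that $(Q)\cap A$ is again of height one, hence principal in the UFD $A$. The one point to state carefully is why $(Q)\cap A$ is nonzero, which follows from $B$ being integral (or finite) over $A$: any nonzero element of $B$ satisfies a monic polynomial relation over $A$ whose constant term, suitably chosen nonzero, lies in the ideal — but here it is even simpler, since we only ever apply this when we already know $P\in(Q)\cap A$ with $P\ne0$.
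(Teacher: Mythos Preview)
Your argument is correct and complete. The paper itself gives no proof of this lemma at all --- it is simply stated as an ``elementary property'' that is being recalled --- so your write-up supplies exactly the routine verification the author chose to omit, and the one nontrivial ingredient you invoke (that a height-one prime of $B$ contracts to a height-one prime of $A$) is precisely the Cohen--Macaulay correspondence the paper cites just before the lemma.
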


\subsection{Different ideal and Jacobian}

As our setup is compatible with that of \cite[Part.\ 3.10]{Benson}, we can
construct the \emph{different $\FD_{B/A}$} of the extension. Let us recall
that it is defined from the inverse different :
\[ \FD_{B/A}^{-1} := \{x \in L \tq \forall y \in B, \Tr_{L/K}(xy) \in A
\}\ ,\]
where $K$ and $L$ are the fields of fractions of $A$ and $B$, $L$ is
regarded as a finite vector space over~$K$, and $\Tr_{L/K}(u)$
denotes the trace of the endomorphism $(x\mapsto ux)$.

The different $\FD_{B/A}$ is then by definition the inverse fractional
ideal to $\FD_{B/A}^{-1}$. It is a (homogeneous) divisorial ideal; in our
setting, as $B$ is a UFD, $\FD_{B/A}$ is thus a principal ideal. We will
see below that the different is simply generated by the Jacobian $J_{B/A}$
of the extension. For now, let us denote by $\theta_{B/A}$ a homogeneous
generator of $\FD_{B/A}$.

\medskip

The different satisfies the following:

\begin{propo}[{\cite[Thm.3.10.2]{Benson}}]
\label{propcaracdiff}
If $\fq$ and $\fp=A\cap \fq$ are prime ideals of height one in~$B$ and $A$,
then $e(\fq,\fp)>1$ if and only if $\FD_{B/A} \subseteq \fq$.

In other words: if $Q$ is an irreducible polynomial in $B$, then $e_Q>1$ if
and only if $Q$ divides~$\theta_{B/A}$.
\end{propo}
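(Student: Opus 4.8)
The plan is to reduce everything to a local statement at the height‑one prime $\fp = \fq \cap A$ and then invoke the classical description of the different of an extension of Dedekind domains. Since $A$ is a polynomial ring (hence a UFD) and $\fp$ has height one, $A_\fp$ is a discrete valuation ring; localizing at $S = A \setminus \fp$ turns $B$ into $B' := S^{-1}B = B \otimes_A A_\fp$, which is a finite free $A_\fp$‑algebra and again a Noetherian normal domain, now of dimension $1$ — that is, a semilocal Dedekind domain whose maximal ideals are exactly the $\fq_i B'$ for the (finitely many) height‑one primes $\fq_i$ of $B$ lying over $\fp$, one of them being $\fq B'$. From the defining property $\FD_{B/A}^{-1} = \{x \in L \tq \forall y \in B,\ \Tr_{L/K}(xy)\in A\}$, using a free $A$‑basis of $B$ to clear denominators, one sees that the inverse different, and hence the different, commutes with this localization: $\FD_{B'/A_\fp} = \FD_{B/A}\,B'$. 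Thus $Q$ divides $\theta_{B/A}$ in $B$ if and only if $\FD_{B'/A_\fp} \subseteq \fq B'$, and the statement to prove becomes: for each $i$, $\FD_{B'/A_\fp} \subseteq \fq_i B'$ if and only if $e_{\fq_i} > 1$.

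Next I would pass to the completion $R := \widehat{A_\fp}$. Then $\widehat{B'} := B' \otimes_{A_\fp} R$ splits as a finite product $\prod_i C_i$, where $C_i$ is the $\fq_i$‑adic completion of $B'$: a complete discrete valuation ring, finite and free over $R$. The different distributes over this product and is compatible with the faithfully flat base change $A_\fp \to R$, so $\FD_{B'/A_\fp} \subseteq \fq_i B'$ if and only if $\FD_{C_i/R} \neq C_i$. Now the key point: the residue fields of $A_\fp$ and of $B'$ are the fields of fractions of $A/\fp$ and $B/\fq_i$, i.e.\ function fields over $\BC$, hence fields of characteristic zero, hence \emph{perfect}. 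So the residue extension of $C_i/R$ is separable, $C_i = R[\alpha_i]$ is monogenic with, say, minimal polynomial $g_i$, and $\FD_{C_i/R} = (g_i'(\alpha_i))$; moreover the valuation of $g_i'(\alpha_i)$ equals $e_{\fq_i}-1$ by the tame‑ramification formula for the different (no wild or inseparable correction term appears, since the characteristic is $0$). In particular $\FD_{C_i/R} = C_i$ if and only if $e_{\fq_i}=1$, which is exactly what is needed.

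This chain of reductions is essentially the content of \cite[Thm.3.10.2]{Benson}, and I do not foresee a serious obstacle: the only points deserving care are the compatibility of the different with localization and completion, the elementary structure theorem splitting $\widehat{B'}$ as a product of complete DVRs, and the observation that, although the residue fields here are genuinely nontrivial extensions of $\BC$ rather than the trivial ones a first guess might suggest, they are still perfect, so the classical tame picture applies verbatim. As an alternative to the completion step one can argue with the trace form: $\FD_{B'/A_\fp}$ is coprime to $\fq_i B'$ precisely when the trace form of the finite algebra $B'/\fp B'$ is nondegenerate on its local factor at $\fq_i$, and that factor is a field — equivalently its trace form is nondegenerate — if and only if $e_{\fq_i}=1$, since a nilpotent element always lies in the radical of a trace form.
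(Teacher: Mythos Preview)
Your argument is correct, and in fact it proves more than what is asked: the tame formula $v_{\fq_i}(\FD_{C_i/R}) = e_{\fq_i}-1$ that you invoke yields directly $v_Q(\theta_{B/A}) = e_Q - 1$, which is the paper's Proposition~\ref{propdiff}, not merely the dichotomy $e_Q>1 \Leftrightarrow v_Q(\theta_{B/A})>0$ of Proposition~\ref{propcaracdiff}.

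As for comparison with the paper: the paper does not actually prove Proposition~\ref{propcaracdiff} at all --- it is simply quoted from Benson. The paper does, however, prove the sharper Proposition~\ref{propdiff}, and its proof is essentially a one-line version of yours: ``localize at $(Q)$ to obtain local Dedekind domains, then use directly Prop.~13 in \cite[Ch.~III]{serre}.'' So your approach and the paper's coincide in spirit; the main difference is that you localize on the $A$-side at $\fp$ (obtaining a semilocal Dedekind domain upstairs, then completing to split it), whereas the paper localizes on the $B$-side at $(Q)$ and jumps straight to Serre's local statement. Your route makes the compatibility of the different with localization and completion, and the role of the characteristic-zero residue fields, more explicit --- which is pedagogically useful --- but it is not a genuinely different argument. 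Your alternative trace-form argument at the end is also valid and gives a slightly more self-contained path to the weaker statement~\ref{propcaracdiff} without invoking the full tame formula.
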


We define the set of ramified ideals:
 \[ \Spram(B):=\{\fq \in \Spec_1(B) \tq e_\fq >1 \}, \]
 which can also be seen as a
system of representatives of the irreducible polynomials $Q$ in~$B$ which are
ramified over $A$. By the above theorem, we have:
\[ \theta_{B/A} \doteq \prod_{Q \in \Spram(B)} Q ^{v_Q(\theta_{B/A})} \ . \]

This can be refined as:

\begin{propo}
\label{propdiff}
For all irreducible $Q$ in $B$, we have: $v_Q(\theta_{B/A})=e_Q -1$. Thus:
\[ \theta_{B/A} \doteq \prod_{Q \in \Spram(B)} Q ^{e_Q -1}. \]
\end{propo}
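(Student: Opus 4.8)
The plan is to prove the local statement $v_Q(\theta_{B/A}) = e_Q - 1$ for each irreducible $Q$ in $B$, and then read off the product formula by combining with Proposition~\ref{propcaracdiff} (which already tells us $v_Q(\theta_{B/A}) = 0$ whenever $e_Q = 1$). Since the different is well behaved under localisation, I would localise at the height-one prime $\fq = (Q)$: set $\fp = \fq \cap A = (P)$, pass to the DVRs $A_\fp$ and $B_\fq$, and use the fact that $\FD_{B/A}$ localises to $\FD_{B_\fq / A_\fp}$ (more precisely, to the different of the semilocal extension $B \otimes_A A_\fp$ over $A_\fp$). This reduces the computation to the classical Dedekind different formula for an extension of complete (or henselian) discrete valuation rings.

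Concretely, once localised, $B_\fq$ is a DVR with uniformiser $Q$ lying over $A_\fp$ with uniformiser $P$, and $P \doteq Q^{e_Q} \cdot u$ with $u$ a unit. The key input is the classical formula for the different of an extension of DVRs: when the residue field extension is separable (here all residue fields are finite extensions of $\BC$, hence equal to $\BC$, so this is automatic) and moreover the extension is \emph{tamely} ramified, the different is $\fq^{e_Q - 1}$. Tameness holds here because the characteristic is $0$, so $e_Q$ is never divisible by the residue characteristic. The standard way to see this is via a generator: $B_\fq = A_\fp[Q]$ locally (the residue field being trivial forces the ramification to be totally and the DVR to be monogenic over the base, generated by a uniformiser), and $Q$ satisfies an Eisenstein-type minimal polynomial $g(T)$ of degree $e_Q$ over $K$ with $g(T) \equiv T^{e_Q} \pmod{\fp}$; then $\FD_{B_\fq/A_\fp} = (g'(Q))$ and $v_Q(g'(Q)) = e_Q - 1$ precisely because $e_Q \ne 0$ in $\BC$.

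Assembling: $v_Q(\theta_{B/A}) = v_Q(\FD_{B/A}) = v_Q(\FD_{B_\fq/A_\fp}) = e_Q - 1$ for ramified $Q$, and $= 0$ otherwise by Proposition~\ref{propcaracdiff} (or by the same computation, which gives $e_Q - 1 = 0$). Since $\theta_{B/A}$ is homogeneous and $B$ is a UFD, it is determined up to scalar by its valuations at all height-one primes, which gives
\[ \theta_{B/A} \doteq \prod_{Q \in \Spec_1(B)} Q^{v_Q(\theta_{B/A})} = \prod_{Q \in \Spram(B)} Q^{e_Q - 1}. \]

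The main obstacle is really a bookkeeping one: justifying that the different commutes with localisation at $\fp$ (so that $\FD_{B/A} \cdot A_\fp = \FD_{(B\otimes_A A_\fp)/A_\fp}$) and that, in our graded setting, $\theta_{B/A}$ is recovered from its divisorial data at height-one primes. Both are standard — the first is a formal property of the trace form under flat base change, the second is because $\FD_{B/A}$ is a divisorial (hence reflexive) ideal in the UFD $B$ — but they need to be cited cleanly from \cite{Benson} or checked directly. The genuinely arithmetic content, the tame different formula $v_Q = e_Q - 1$, is entirely due to working over $\BC$ (characteristic zero, trivial residue fields), so there is no wild ramification to worry about; I would make sure to flag explicitly where characteristic zero is used, since this is exactly the point at which a positive-characteristic analogue would break.
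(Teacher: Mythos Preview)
Your approach is exactly the paper's: localise at the height-one prime to reduce to an extension of discrete valuation rings, then invoke the classical tame different formula (the paper simply cites Serre, \emph{Corps Locaux}, Ch.~III, Prop.~13, without further elaboration).

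One correction is needed, though it does not break the argument. The residue fields of $A_\fp$ and $B_\fq$ are \emph{not} finite extensions of~$\BC$: they are the function fields $\operatorname{Frac}(A/P)$ and $\operatorname{Frac}(B/Q)$ of the hypersurfaces $Z(P)$ and $Z(Q)$, each of transcendence degree $n-1$ over~$\BC$. So your claim that the residue extension is trivial, that the ramification is therefore total, and that $B_\fq = A_\fp[Q]$ with $Q$ satisfying an Eisenstein polynomial of degree~$e_Q$, is unjustified in general (the residue degree $f_\fq$ may well exceed~$1$). Fortunately the tame different formula $v_\fq(\FD) = e_\fq - 1$ requires only that the residue extension be separable and that $e_\fq$ be coprime to the residue characteristic --- both automatic in characteristic zero --- and does not need $f_\fq = 1$. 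So drop the Eisenstein paragraph and cite the general tame formula (as Serre's Prop.~13 does), and the proof stands.
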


\begin{proof}
  We localize at $(Q)$ in order to obtain local Dedekind domains. Then we
  can use directly Prop.\ 13 in \cite[Ch. III]{serre}. 
\end{proof}

Let us show that the different is actually generated by the Jacobian
determinant. For this, we need to introduce the \emph{Kähler different} of
the extension $A \subseteq B$. According to Broer \cite{diff} (end of first
section), when $B$ is a polynomial algebra, the Kähler different can be
defined as the ideal generated by the Jacobians of all $n$-tuples of
elements of $A$, with respect to $X_1,\dots,X_n$. But here we are in an
even more specific situation, where $A$ is also a polynomial ring. Thus,
whenever we take $g_1,\dots, g_n \in A=\BC[f_1,\dots,f_n]$, we have
\[ \det \left( \frac{\partial g_i}{\partial X_k} \right)_{\substack{1 \leq
    i\leq n\\1\leq k \leq n}} = \det \left( \frac{\partial g_i}{\partial f_j}
  \right)_{\substack{1 \leq i\leq n\\1\leq j \leq n}} \det \left( \frac{\partial
        f_j}{\partial X_k} \right)_{\substack{1 \leq j\leq n\\1\leq k \leq n}} \ ,\] so
      the Kähler different is simply the principal ideal of $B$ generated
      by the polynomial
\[ J_{B/A}:=\Jac ((f_1,\dots, f_n) / (X_1,\dots, X_n))= \det \left(
  \frac{\partial f_i}{\partial X_j} \right)_{\substack{1 \leq i\leq
    n\\1\leq j \leq n}} .\]

\begin{propo}
\label{propdiffjac}
With the hypothesis above, we have:
\[ \theta_{B/A} \doteq J_{B/A} .\]
\end{propo}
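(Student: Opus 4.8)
The plan is to identify the different $\FD_{B/A}$ with the Kähler different, which we have just observed is the principal ideal generated by $J_{B/A}$. The standard tool here is the relation between the Dedekind (complementary/Noether) different and the Kähler (Fitting/Jacobian) different for a finite extension of regular rings, and the cleanest route in our situation is to reduce to the local picture. Concretely, I would first reduce, as in the proof of Proposition~\ref{propdiff}, to the localisation at a height-one prime $(Q)$ of $B$: since both $\FD_{B/A}$ and the Kähler different are divisorial (indeed principal, as $B$ is a UFD), it suffices to compare their valuations $v_Q$ for every irreducible $Q$, and by Proposition~\ref{propcaracdiff} and the discussion of the Kähler different both are supported exactly on the ramified primes, so the only issue is matching multiplicities.

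Next I would treat the local comparison. After localising, $A_{\fp} \subseteq B_{\fq}$ (or better, the completions) becomes an extension of discrete valuation rings (or a product thereof over the finitely many $Q$ above a given $P$), and for such extensions the Kähler different and the Dedekind different coincide under a separability/tameness-type hypothesis; but since we are in characteristic zero everything is separable, and the classical result (see \cite[Ch.~III]{serre}, or Broer~\cite{diff}) gives that in the DVR case the Kähler different is generated by $P'(\pi)$ for a suitable Eisenstein-type generator, which equals the Dedekind different. So the valuation of $J_{B/A}$ at $Q$ equals $v_Q(\theta_{B/A})$, and both equal $e_Q-1$ by Proposition~\ref{propdiff}. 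Globalising back (again using that both ideals are divisorial and agree in codimension one), we conclude $\FD_{B/A} = (J_{B/A})$ up to a unit, which is the assertion $\theta_{B/A} \doteq J_{B/A}$.

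Alternatively, and perhaps more in the spirit of the paper's elementary commutative algebra, one can give a direct argument: the trace form pairing $B \times B \to A$, $(x,y) \mapsto \Tr_{L/K}(xy)$, has discriminant (as a bilinear form on the free $A$-module $B$) equal to $\det(\Tr_{L/K}(\omega_i\omega_j))$ for an $A$-basis $\omega_1,\dots,\omega_n$ of $B$; the inverse different $\FD_{B/A}^{-1}$ is then the fractional ideal spanned by the dual basis, so $\FD_{B/A}$ is generated by the determinant of the change-of-basis matrix between $\{\omega_i\}$ and its dual, i.e.\ by that discriminant. On the other hand $J_{B/A}^2 \doteq \disc(B/A)$ up to a unit (the Jacobian criterion for the discriminant of a complete intersection / polynomial extension), so $(J_{B/A})^2 = (\theta_{B/A})^2$ as ideals; since $B$ is a UFD and both are principal generated by polynomials, taking square roots of principal ideals gives $(J_{B/A}) = (\theta_{B/A})$, hence $\theta_{B/A} \doteq J_{B/A}$.

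The main obstacle I anticipate is justifying the passage from the squared relation (or the codimension-one local comparison) back to the genuine equality of the ideals, and more precisely pinning down exactly which classical statement one cites for ``Kähler different $=$ Dedekind different'': one must be a little careful that $B$ is not merely étale over $A$ (it is genuinely ramified), so one needs the version valid for tamely ramified, or simply separable-in-char-zero, extensions of regular local rings — this is where invoking \cite[Ch.~III, Prop.~13 and the surrounding discussion]{serre} together with Broer~\cite{diff} does the real work. Everything else (divisoriality, reduction to DVRs, the base-change formula for Jacobians already displayed before the proposition) is routine.
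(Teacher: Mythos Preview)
Your first approach is correct and close in spirit to the paper's proof, though the paper takes an even shorter route: it simply invokes Broer~\cite{diff}, Corollary~1, directly for the global equality of the K\"ahler different and the Dedekind different $\FD_{B/A}$ (both $A$ and $B$ being polynomial, hence regular, so Broer's hypotheses apply). Your reduction to height-one localisations and then to complete DVRs is a valid way to \emph{unpack} that citation --- after completion one has a finite separable extension of complete DVRs with separable residue extension, where $\Omega_{B/A}\cong B/\FD_{B/A}$ gives the local identification --- and is arguably more transparent. The paper itself remarks that Broer's result is more general than needed and that a simpler proof should exist, so your outline is a reasonable candidate for that; you correctly identify that the only delicate point is pinning down the precise local statement to cite.

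Your second, ``alternative'' approach, however, contains a genuine error. The claimed identity $J_{B/A}^{2}\doteq \Disc(B/A)$ (discriminant of the trace form) is \emph{false} in general. The correct relation is that $\Disc(B/A)$ equals the norm $N_{L/K}(\theta_{B/A})$ (as ideals of $A$), and there is no reason for the norm of $\theta$ to agree with $\theta^{2}$ unless, say, the extension has degree~$2$ or all ramification indices equal~$2$. Concretely, take $A=\BC[X^{3}]\subseteq \BC[X]=B$: then $J=3X^{2}$ so $J^{2}=9X^{4}$, whereas computing the trace form on the $A$-basis $1,X,X^{2}$ gives $\Disc(B/A)=-27X^{6}$; these are not associates in $B$. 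So the squared relation does not hold, and you cannot recover $(J)=(\theta)$ by ``taking square roots''. You may be conflating this with the real-reflection-group situation (all $e_{H}=2$, where indeed $\Delta_W=J_W^{2}$), or with the monogenic case $B=A[T]/(g)$ where the different is $(g'(\bar T))$ --- but that last fact is a direct identification of the different with a derivative, not a square-root-of-discriminant argument.
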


\begin{proof}
  In \cite{diff}, Broer studies several notions of different ideals, and
  proves that under certain hypothesis they are equal. We are here in the
  hypothesis of his Corollary 1, which states in particular that the Kähler
  different is equal to the different $\FD_{B/A}$.
\end{proof}

Note that we use here a strong result (which applies in much more
generality than what we need). It should be possible to give a simpler
proof. Let us simply add here a more explicit proof of the fact that
the polynomials $J_{B/A}$ and $\theta_{B/A}$ have the same degree
(using a ramification formula of Benson).

\begin{lemma}
With the hypothesis and notations above, we have:
\[\deg (\theta_{B/A}) =\deg (J_{B/A}). \]
\end{lemma}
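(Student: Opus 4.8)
The plan is to compute both degrees explicitly and check they agree. For $J_{B/A}$ this is immediate from its definition as a determinant: the $(i,j)$ entry $\partial f_i/\partial X_j$ is weighted homogeneous of weight $a_i - b_j$, so the determinant is weighted homogeneous of weight $\sum_i a_i - \sum_j b_j$. Hence $\deg(J_{B/A}) = \sum_{i=1}^n a_i - \sum_{j=1}^n b_j$.

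For $\theta_{B/A}$, the idea is to use a ramification (degree/trace) formula of Benson together with Prop.\ \ref{propdiff}. By Prop.\ \ref{propdiff} we have $\deg(\theta_{B/A}) = \sum_{Q \in \Spram(B)} (e_Q - 1)\deg Q = \sum_{Q \in \Spec_1(B)} (e_Q - 1)\deg Q$, the extra terms being zero since $e_Q = 1$ for unramified $Q$. So I need to show $\sum_{Q} (e_Q - 1)\deg Q = \sum_i a_i - \sum_j b_j$. The right-hand side can be rewritten: since $B$ is free of rank $r = \prod a_i / \prod b_i$ over $A$, and more usefully, I will exploit the fact that for each $(P) \in \Spec_1(A)$ one has $P \doteq \prod_{Q \text{ over } P} Q^{e_Q}$ (the Lemma in \S\ref{subpartsetting}), together with the ``fundamental identity'' $\sum_{Q \text{ over } P} e_Q f_Q = r$ where $f_Q$ is the residue degree; comparing degrees in $P \doteq \prod Q^{e_Q}$ gives $\deg P = \sum_{Q \text{ over } P} e_Q \deg Q$. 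The cleanest route is Benson's formula for the degree of the different (the graded analogue of the conductor–discriminant / Riemann–Hurwitz relation): summing a suitable local contribution over height-one primes yields exactly $\sum_i a_i - \sum_j b_j - (\text{contribution of the unramified part})$, which collapses to $\sum_Q (e_Q-1)\deg Q$. Concretely I would cite the ramification formula in \cite[Chap.\ 3]{Benson} that expresses $\sum_{\fq}(e_\fq - 1)\deg\fq$ (or the degree of $\FD_{B/A}$) in terms of the weights $a_i$ and $b_j$, and match it against $\deg J_{B/A}$ computed above.

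The main obstacle is bookkeeping with the grading: Benson's ramification formulas are usually stated for the ungraded situation or for $B^G \subseteq B$, so I must check that the version I invoke applies to a general finite graded polynomial extension and keeps track of weights correctly — in particular that ``$\deg$'' throughout means weighted degree with the weights $b_j$ on $B$ and that the formula $\sum(e_\fq-1)\deg\fq = \sum a_i - \sum b_j$ holds with no correction term. Once that weighted ramification identity is in hand, the equality $\deg(\theta_{B/A}) = \deg(J_{B/A})$ is just a comparison of the two expressions.
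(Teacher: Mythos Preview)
Your approach is essentially the same as the paper's: compute $\deg J_{B/A}=\sum_i a_i-\sum_j b_j$ from the determinant, and obtain $\deg\theta_{B/A}$ via a ramification formula of Benson. The paper makes precise what you leave vague: the relevant statement is \cite[Thm.~3.12.1]{Benson}, namely
\[
|L:K|\,\psi(A)-\psi(B)=\tfrac12\sum_{\fq\in\Spec_1(B)} v_\fq(\FD_{B/A})\,\psi(B/\fq),
\]
where $\psi(\cdot)$ is the subleading coefficient in the Laurent expansion of the Hilbert series at $t=1$; explicit computation of $\psi(A)$, $\psi(B)$, $\psi(B/\fq)$ from the graded dimensions then yields $\tfrac{1}{2\prod b_i}\deg J_{B/A}$ on the left and $\tfrac{1}{2\prod b_i}\deg\theta_{B/A}$ on the right. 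Note also that your detour through Prop.~\ref{propdiff} is unnecessary: one only needs $\deg\theta_{B/A}=\sum_Q v_Q(\theta_{B/A})\deg Q$, which is just unique factorisation.
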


\begin{proof}
 First we need to recall some notations for graded algebras, see
 \cite[Ch.2.4]{Benson}. If $A$ is a graded algebra, and $M$ a graded
 $A$-module, we denote the usual Hilbert-Poincaré series of $M$ by $\grdim
 M$ (for ``graded dimension''): $\grdim M := \sum_k \dim M_k t^k $. If
 $n$ is the Krull dimension of $A$, we define rational numbers $\deg(M)$
 and $\psi(M)$ by the Laurent expansion about $t=1$:
 \[ \grdim(M)= \frac{\deg(M)}{(1-t)^n} + \frac{\psi(M)}{(1-t)^{n-1}}+
 o\left(\frac{1}{(1-t)^{n-1}}\right).\]

 \medskip 

 Now if we return to our context we can use the following ramification
 formula from \cite[Thm 3.12.1]{Benson}:
 \[ |L:K|\psi(A) - \psi(B) = \frac12 \sum_{\fp \in \Spec_1(B)} v_\fp
 (\FD_{B/A}) \psi (B/\fp) .\] We have $\grdim A= \prod_{i=1}^n
 \frac{1}{1-t^{a_i}}$, so by computing the derivative of $(1-t)^n\grdim A$
 at $t=1$ we get:
 \[ \psi(A)=\frac{1}{\prod_i a_i} \sum_i \frac{a_i-1}{2} \ , \]
 and similarly for $\psi(B)$. As $|L:K|=\prod a_i / \prod b_i$, we obtain:
 \[ |L:K|\psi(A) - \psi(B)= \frac{1}{\prod_i b_i} \sum_i \frac{a_i-b_i}{2}
 = \frac{1}{2\prod_i b_i} \deg J_{B/A}.\]
 On the other hand, for $\fp \in \Spec_1(B)$, if $d$ denotes the degree of a
 homogeneous polynomial~$P$ generating $\fp$, we have
 \[ \grdim B/ \fp= (1-t^d)\prod_i \frac1{1-t^{b_i}} \ ,\]
 so after computation we get
 \[ \psi(B/ \fp)= \frac{d}{\prod_i b_i} \ . \]
 Thus, the r.h.s.\ of the ramification formula becomes 
 \[ \frac1{2\prod_i b_i} \sum_{P \in \Spec_1(B)} v_P
 (\theta_{B/A}) \deg P = \frac1{2\prod_i b_i} \deg \theta_{B/A} \]
 and we can conclude that $\deg J_{B/A}=\deg \theta_{B/A}$.
\end{proof}

\bigskip

As a direct consequence of Propositions \ref{propdiff} and
\ref{propdiffjac}, we obtain the factorisation of $J_{B/A}$.

\begin{theo}
\label{thmjac}
If $A\subseteq B$ is a finite graded
polynomial extension, then we have, with the notations above:
\[ J_{B/A} \doteq \prod_{Q \in \Spram(B)} Q ^{e_Q -1}. \]
\end{theo}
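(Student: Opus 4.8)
The plan is to combine the two preparatory results that have just been established, so that Theorem \ref{thmjac} becomes essentially a one-line corollary. Concretely, Proposition \ref{propdiffjac} identifies the Jacobian $J_{B/A}$ with a homogeneous generator $\theta_{B/A}$ of the different ideal $\FD_{B/A}$ (up to a nonzero scalar), and Proposition \ref{propdiff} computes the $Q$-adic valuation of $\theta_{B/A}$ for every irreducible $Q\in B$, namely $v_Q(\theta_{B/A})=e_Q-1$. Since $B$ is a UFD, a homogeneous element is determined up to scalar by its valuations at all height-one primes, so chaining these two facts immediately yields
\[ J_{B/A}\doteq\theta_{B/A}\doteq\prod_{Q\in\Spec_1(B)}Q^{\,e_Q-1}. \]

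The only bookkeeping point to address is why the product may be restricted to $\Spram(B)$. This is precisely because $e_Q-1=0$ whenever $Q\notin\Spram(B)$, i.e.\ whenever $e_Q=1$; such factors contribute trivially and can be dropped, exactly as already noted in the discussion following Proposition \ref{propcaracdiff}. Equivalently, by Proposition \ref{propcaracdiff} an irreducible $Q$ divides $\theta_{B/A}$ if and only if $e_Q>1$, so the support of the divisor of $\theta_{B/A}$ is exactly $\Spram(B)$; I would just invoke that remark. I should also make sure the product makes sense: finiteness of $\Spram(B)$ follows because $\theta_{B/A}=J_{B/A}$ is a single nonzero homogeneous polynomial, hence has only finitely many irreducible divisors (and $J_{B/A}$ is nonzero since the extension $A\subseteq B$ is generically étale, the fraction field extension $L/K$ being separable in characteristic zero, so the Jacobian does not vanish identically).

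Accordingly the proof I would write is short: ``By Proposition \ref{propdiffjac}, $J_{B/A}\doteq\theta_{B/A}$. By Proposition \ref{propdiff}, $v_Q(\theta_{B/A})=e_Q-1$ for every irreducible $Q\in B$, and this valuation vanishes unless $Q\in\Spram(B)$. Since $B$ is a UFD and $\theta_{B/A}$ is homogeneous, it is determined up to a nonzero scalar by its valuations at height-one primes, whence $J_{B/A}\doteq\theta_{B/A}\doteq\prod_{Q\in\Spram(B)}Q^{e_Q-1}$.'' That is really all that is needed.

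There is no genuine obstacle here; the substance of the theorem was already spent in proving Propositions \ref{propdiff} and \ref{propdiffjac}. If I were to point to the most delicate ingredient among those, it would be Proposition \ref{propdiffjac}, which rests on Broer's comparison of the Kähler different with the different $\FD_{B/A}$ — the excerpt itself flags that this uses a strong external result and that a more elementary argument ought to exist. But taking the earlier propositions as given, as the problem allows, Theorem \ref{thmjac} is a formal consequence and the write-up is just the concatenation above.
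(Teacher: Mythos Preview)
Your proposal is correct and matches the paper's own approach exactly: the paper states Theorem \ref{thmjac} ``as a direct consequence of Propositions \ref{propdiff} and \ref{propdiffjac}'', with no further argument. Your additional bookkeeping (restricting to $\Spram(B)$, finiteness of the product, the UFD remark) is fine but more than the paper itself provides.
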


This formula settles the first part of Thm.\ \ref{thmintrojac}. In
section \ref{partwellram} we will define the well-ramified property,
in order to deal with the second part of Thm.\ \ref{thmintrojac}.

\section{Geometric properties}

\label{partgeom}

In this section we recall some definitions and elementary facts about the
ramification locus and branch locus of a branched covering. These will be
used in section \ref{partwellram}, in order to give geometric interpretations of the
well-ramified property (Def.\ \ref{defwellram} and Prop.\ \ref{propwell}).

\subsection{Ramification locus and branch locus}
\label{subpartlocusnew}

Let us define the varieties $U=\Spec A$ and ${V=\Spec B}$, so that to the
extension $A \subseteq B$ (as above), of degree $r$, corresponds an
algebraic quasi-homogoneous finite morphism $f:V \to U$. We
denote by $J$ the Jacobian of $f$.

\begin{propo}
In $V$, we have equalities between:
\begin{enumerate}[(i)]
\item the set of points where $f$ is not étale, \ie zeros of the Jacobian $J$;
\item the union of the sets of zeros of the ramified polynomials of $B$;
\item the set of zeros of the generator $\theta_{B/A}$ of the different
  $\FD_{B/A}$.
\end{enumerate}
\end{propo}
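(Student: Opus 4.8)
The plan is to prove the three sets coincide by combining the algebraic facts already established with the basic equivalence between zero sets and divisibility in the UFD $B$. The key observation is that all three sets are, by definition, the zero locus of some polynomial (or union of such loci), so it suffices to compare the radicals of the corresponding ideals, equivalently the sets of irreducible factors (up to association) of the defining polynomials.

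First I would handle the equivalence of (ii) and (iii). The ramified polynomials of $B$ are by definition the irreducible $Q$ with $e_Q>1$, and Proposition \ref{propcaracdiff} says exactly that such $Q$ are precisely the irreducible divisors of $\theta_{B/A}$. Hence the union of the zero sets of the ramified polynomials equals the zero set of $\prod_{Q\in\Spram(B)}Q$, which has the same zero set as $\theta_{B/A}$ since a polynomial and the product of its distinct irreducible factors define the same hypersurface. This gives $(ii)=(iii)$.

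Next, the equivalence of (i) and (iii): by Proposition \ref{propdiffjac} we have $\theta_{B/A}\doteq J_{B/A}=J$, so the zero set of $J$ is literally the zero set of $\theta_{B/A}$, giving $(i)=(iii)$ immediately. Alternatively, without invoking Broer's theorem, one can argue $(i)=(ii)$ directly: a point $x\in V$ is a point where $f$ is not étale iff the Jacobian matrix $(\partial f_i/\partial X_j)$ is singular at $x$, i.e. iff $J(x)=0$; and by Theorem \ref{thmjac}, $J\doteq\prod_{Q\in\Spram(B)}Q^{e_Q-1}$, whose zero set is exactly $\bigcup_{Q\in\Spram(B)}\{Q=0\}$ (the exponents $e_Q-1\geq 1$ do not change the zero set). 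Combining the two chains $(i)=(iii)$ and $(ii)=(iii)$ (or $(i)=(ii)$ and $(ii)=(iii)$) closes the loop.

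There is essentially no obstacle here: the statement is a direct repackaging of Propositions \ref{propcaracdiff}, \ref{propdiff}, \ref{propdiffjac} and Theorem \ref{thmjac} at the level of zero loci, using only that $B$ is a UFD so that an ideal's vanishing set depends only on its radical, i.e. on the set of irreducible factors of a generator. The only point requiring a word of care is the identification ``$f$ not étale at $x$'' $\Leftrightarrow$ ``$J(x)=0$'': since both $A$ and $B$ are smooth (polynomial algebras), étaleness of $f$ at $x$ is equivalent to $df_x$ being an isomorphism, which is equivalent to the non-vanishing of the determinant $J(x)$ of the Jacobian matrix; this is standard, so the proof can simply cite it. Thus the write-up amounts to stringing these equalities together.
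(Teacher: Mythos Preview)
Your proof is correct and follows essentially the same approach as the paper: the paper's proof consists of exactly the two lines ``$(ii)=(iii)$ from Prop.~\ref{propcaracdiff}, and $(iii)=(i)$ from Prop.~\ref{propdiffjac}.'' Your additional alternative route via Theorem~\ref{thmjac} for $(i)=(ii)$ is a valid extra observation but not needed.
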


This set is called the \emph{ramification locus} $\Vram$.

\begin{proof}
  (ii)=(iii) comes from Prop.\ \ref{propcaracdiff}, and (iii)=(i) from
  Prop.\ \ref{propdiffjac}.
\end{proof}

\medskip

The following (well-known) proposition gives an upper bound for the
cardinality of the fibers of $f$.

\begin{propo} For all $u\in U$, $|f^{-1}(u)| \leq r$, where $r$ is the
  degree of $f$.
\end{propo}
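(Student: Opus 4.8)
The plan is to use the fact that, by generic smoothness / Galois theory of the fraction field extension $K \subseteq L$, the morphism $f$ is generically a covering of degree $r = |L:K|$, and then propagate this bound to \emph{every} fiber using properness (finiteness) of $f$ together with upper semicontinuity of fiber cardinality for a finite morphism. Concretely, I would first observe that $L/K$ is a finite field extension of degree $r$, so there is a nonempty open subset $U_0 \subseteq U$ over which $f$ is étale of degree $r$ (remove the zero locus of $J$, and, if needed, a further closed set to make the residue field extensions separable — but in characteristic $0$ this is automatic). Over $U_0$ every fiber has exactly $r$ points.

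Next I would reduce the general bound to this generic statement. The key input is that $f$ is a finite morphism: $B$ is a finitely generated $A$-module, in fact free of rank $r$. Fix $u \in U$, corresponding to a maximal ideal $\fm \subseteq A$ (or, more precisely after the quasi-homogeneous setup, a point of $\BC^n$). Then $f^{-1}(u) = \Spec(B \otimes_A A/\fm) = \Spec(B/\fm B)$, and $B/\fm B$ is a $\BC$-algebra which is finite-dimensional of dimension exactly $r$ over $\BC = A/\fm$, because $B$ is a \emph{free} $A$-module of rank $r$. A finite-dimensional commutative $\BC$-algebra of dimension $r$ has at most $r$ maximal ideals (for instance: its spectrum is a finite set of points, and the number of points is bounded by the $\BC$-dimension, since the quotient by the nilradical splits as a product of local Artinian rings, one per point, each of dimension $\geq 1$). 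Hence $|f^{-1}(u)| \leq r$. This argument in fact does not even need the generic étale statement — freeness of $B$ over $A$, already established in the excerpt, does all the work.

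The steps in order are: (1) recall from the excerpt that $B$ is a free $A$-module of rank $r = \prod a_i/\prod b_i$; (2) for $u \in U$ with corresponding maximal ideal $\fm$, identify $f^{-1}(u)$ set-theoretically with the maximal spectrum of $B/\fm B$; (3) note $\dim_\BC B/\fm B = r$ by freeness and base change; (4) bound the number of maximal (equivalently, minimal prime) ideals of a $\BC$-algebra of finite dimension $r$ by $r$, using the decomposition of its reduced quotient into a product of fields. I would present it essentially in this form, perhaps citing a standard commutative algebra reference for step (4) or including the one-line argument.

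The main (minor) obstacle is just bookkeeping about what "point" and "fiber" mean in the graded/quasi-homogeneous setting versus the affine scheme $\Spec B$: one must be slightly careful that the relevant fibers are over \emph{closed} points and that $A/\fm \cong \BC$, so that $\dim_\BC B/\fm B$ genuinely computes the length and hence bounds the cardinality. Since $A = \BC[f_1,\dots,f_n]$ is a polynomial ring over $\BC$ and we work with $\BC$-points, this is straightforward. No serious difficulty is expected; the statement is genuinely elementary once freeness of the extension is in hand.
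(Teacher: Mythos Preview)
Your argument is correct, and the final version you settle on (steps (1)--(4)) is clean: freeness of $B$ over $A$ of rank $r$ gives $\dim_\BC B/\fm B = r$ for any closed point $u\leftrightarrow\fm$, and a commutative Artinian $\BC$-algebra of dimension $r$ has at most $r$ maximal ideals by the Chinese Remainder Theorem. The opening paragraph about generic \'etaleness and semicontinuity is unnecessary scaffolding, as you yourself note; you can drop it entirely.

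The paper's proof takes a different, more ``classical'' route (following Shafarevich): given $f^{-1}(u)=\{v_1,\dots,v_m\}$, choose an element $b\in B$ taking distinct values at the $v_i$; since $B$ has rank $r$ over $A$, Cayley--Hamilton gives a monic polynomial $P\in A[T]$ of degree $\leq r$ with $P(b)=0$; specializing the coefficients at $u$ produces a polynomial in $\BC[T]$ of degree $\leq r$ with the $m$ distinct roots $b(v_1),\dots,b(v_m)$, whence $m\leq r$. Both arguments ultimately rest on the same fact (rank $r$), but yours works globally with the fiber algebra and avoids the auxiliary choice of a separating element, while the paper's is phrased entirely in terms of a single integral dependence relation and is perhaps more elementary to a reader unfamiliar with $\Spec$. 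Either is perfectly adequate here.
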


\begin{proof}
  (After \cite[II.5.Thm.6]{shaf}.) Let $u$ be in $U$, and write
  $f^{-1}(u)=\{v_1,\dots, v_m\}$. One can easily find an element $b$ in
  $B=\CO_V$ such that all the values $b(v_i)$ are distinct.
  
  As $f$ is finite, $B$ is a module of finite type (of rank $r$) over
  $A$. Thus, every element in~$B$ is a root of a unitary polynomial with
  coefficients in $A$, and degree less than or equal to $r$. Let $P\in
  A[T]$ be such a polynomial for $b$, and write $P=\sum_{i=0}^{d}a_iT^i$,
  with $d \leq r$ and $a_d=1$. We have
  \[ \sum_{i=0}^{d}a_ib^i =0 .\] For $j\in \{1,\dots, m\}$, as $f(v_j)=u$,
  specializing the above identity at $v_j$ gives
  \[ \sum_{i=0}^{d}a_i(u) b(v_j)^i =0 .\] So the polynomial $\sum_i a_i(u)
  T^i$ has $m$ distinct roots $b(v_1),\dots , b(v_m)$, and has degree $d$,
  so we obtain $m \leq d \leq r$.
\end{proof}

Points in $U$ whose fiber does not have maximal cardinality are called
\emph{branch points}, and form the \emph{branch locus} of $f$ in $U$:
\[ \Ubr := \left\{ u \in U \ ,\  \left|f^{-1}(u)\right| <r \right\} \ .\]

It is easy to show that $\Ubr$ is closed for the Zariski topology
and is not equal to $U$ (cf. \cite[II.5.Thm.~7]{shaf}), so that
$U-\Ubr$ is dense in $U$.

\begin{propo}
\label{proprambr}
  With the notations above, we have the following equality:
  \[ f(\Vram) = \Ubr \ .\] 

  So $\Vram\subseteq f^{-1}(\Ubr)$. Moreover, the
  restriction of $f$: \[V- f^{-1}(\Ubr)\ \surj \ U-
  \Ubr \] is a topological $r$-fold covering (for the
  transcendental topology).
\end{propo}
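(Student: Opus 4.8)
The plan is to prove the three claims of Proposition~\ref{proprambr} in turn: first the set-theoretic equality $f(\Vram)=\Ubr$, then the covering statement, and along the way the inclusion $\Vram\subseteq f^{-1}(\Ubr)$ comes for free from $f(\Vram)=\Ubr$. The key tool throughout is that $f$, being a finite quasi-homogeneous morphism between smooth affine varieties, is finite flat, hence open, and that over a point $u\in U$ the fiber cardinality $|f^{-1}(u)|$ is controlled both by the degree $r$ and by whether $f$ is étale over $u$.

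\medskip

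\textbf{Step 1: $f(\Vram)\subseteq \Ubr$.} Suppose $v\in\Vram$, so $J(v)=0$, and let $u=f(v)$. I would argue that the fiber $f^{-1}(u)$ cannot have full cardinality $r$. The cleanest route is via the structure of the finite flat map: since $B$ is free of rank $r$ over $A$, for any $u$ the fiber algebra $B\otimes_A \kappa(u)$ is an $r$-dimensional $\BC$-algebra, and $|f^{-1}(u)|=r$ holds if and only if this algebra is reduced (i.e. $\cong \BC^r$), equivalently if and only if $f$ is étale over $u$. But if $f$ were étale over $u$, it would be étale at every point of $f^{-1}(u)$, contradicting $J(v)=0$. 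Hence $|f^{-1}(u)|<r$ and $u\in\Ubr$. This also yields $\Vram\subseteq f^{-1}(f(\Vram))\subseteq f^{-1}(\Ubr)$.

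\medskip

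\textbf{Step 2: $\Ubr\subseteq f(\Vram)$.} Conversely, let $u\in\Ubr$, so $|f^{-1}(u)|<r$. By the criterion recalled in Step~1, the fiber algebra $B\otimes_A\kappa(u)$ is not reduced, so it has a point $v\in f^{-1}(u)$ at which the local ring $\CO_{f^{-1}(u),v}$ is not a field; equivalently, $f$ is not étale at $v$, i.e. $J(v)=0$, so $v\in\Vram$ and $u=f(v)\in f(\Vram)$. Combining Steps~1 and 2 gives $f(\Vram)=\Ubr$. (An alternative, more elementary argument for this direction: refine the counting proof of the fiber-bound proposition. If all of $f^{-1}(u)$ consisted of étale points, a monodromy/degree argument near $u$ — using that $f$ is finite flat, hence the fiber length is constant equal to $r$ — would force $|f^{-1}(u)|=r$.)

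\medskip

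\textbf{Step 3: the covering statement.} Restrict $f$ to $V':=V-f^{-1}(\Ubr)$ over $U':=U-\Ubr$. Over every point of $U'$ the fiber has exactly $r$ points, and by Step~1's criterion $f$ is étale over all of $U'$; so $f|_{V'}:V'\to U'$ is finite, étale, and surjective of constant degree $r$. A finite étale surjective morphism of complex varieties is, for the transcendental topology, a covering map; since $U'$ is connected (it is a dense Zariski-open in the irreducible variety $U$) and the fibers all have size $r$, it is an $r$-fold covering. I would cite the standard fact that finite étale $\Rightarrow$ local homeomorphism with finite fibers $\Rightarrow$ topological covering (the analytification of an étale map is a local biholomorphism, and properness plus discreteness of fibers upgrades this to an honest covering).

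\medskip

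\textbf{Main obstacle.} The delicate point is Step~2, the inclusion $\Ubr\subseteq f(\Vram)$: it really uses that $f$ is \emph{flat} (not merely finite and dominant), so that the fiber length is exactly $r$ everywhere and a drop in the number of points is forced to come from a genuine non-reduced (hence non-étale) point rather than from some pathology. Here flatness is available because $B$ is a free $A$-module — which the excerpt has already established — so the argument goes through; but if one wanted to avoid invoking flatness, one would have to work harder, e.g. with a direct monodromy argument on $U'$. I would therefore make the freeness/flatness of $B$ over $A$ explicit at the start of the proof and phrase the whole argument around the identity $\dim_{\BC}\bigl(B\otimes_A\kappa(u)\bigr)=r$ together with: this number equals $|f^{-1}(u)|$ iff $f$ is étale over $u$.
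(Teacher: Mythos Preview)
Your proof is correct and takes a genuinely different route from the paper's. The paper argues both inclusions analytically: for $\Ubr\subseteq f(\Vram)$ it picks $u\in\Ubr$, approximates it by a sequence $u^{(k)}\in U-\Ubr$, tracks the $r$ preimages of each $u^{(k)}$, and uses a pigeonhole-plus-inverse-function-theorem argument to force two of them to collide at a point where $J$ vanishes; for $f(\Vram)\subseteq\Ubr$ it quotes a result from Shafarevich; and for the covering statement it builds the local trivialisation by hand from the inverse function theorem. You instead exploit the freeness of $B$ over $A$ directly: the identity $\dim_\BC\bigl(B\otimes_A\kappa(u)\bigr)=r$ together with the equivalence ``fiber reduced $\Leftrightarrow$ étale over $u$ $\Leftrightarrow$ $|f^{-1}(u)|=r$'' dispatches both inclusions symmetrically, and the covering statement then falls out of the standard fact that finite étale morphisms analytify to covering maps. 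Your approach is cleaner and more uniform, and it makes transparent exactly where flatness is used (as you yourself flag); the paper's approach has the virtue of being more self-contained, relying only on the classical inverse function theorem rather than on the algebraic theory of étale morphisms.
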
 

\begin{proof}
  We set $U':= U- \Ubr$ and $V':=V-f^{-1}(\Ubr)$;
  these are Zariski-open.

  \medskip

  First, let $u$ be in $\Ubr$. As $U'$ is dense in $U$, we can
  find a sequence $u^{(k)}$ of elements in $U'$, whose limit is $u$. Let us
  write $f^{-1}(u)=\{v_1,\dots, v_p\}$ (with $p <r$), and
  $f^{-1}(u^{(k)})=\{v_1^{(k)},\dots, v_r^{(k)}\}$. Up to extracting
  subsequences, we can assume that each sequence $(v_i^{(k)})_{k \in \BN}$
  converges towards one of the $v_j$. As $r<n$, we have at least two
  sequences, say $v_1^{(k)}$ and $v_2^{(k)}$, whose limit is the same
  element of $f^{-1}(u)$, say $v_1$. But for all $k$, we have $v_1^{(k)}
  \neq v_2^{(k)}$ and $f(v_1^{(k)})= f(v_2^{(k)})=u^{(k)}$. If $J(v_1)\neq
  0$, this contradicts the inverse function theorem. So $J(v_1)=0$, and $u
  \in f(Z(J))=f(\Vram)$.
  
  \medskip

  By \cite[II.5.3.Cor.~2]{shaf}, if $f$ is unramified at $u$, then for all
  $v\in f^{-1}(u)$, the tangent mapping of~$f$ at~$v$ is an
  isomorphism. That means exactly that $f^{-1}(U- \Ubr) \subseteq
  V-Z(J)$, \ie $f(Z(J)) \subseteq \Ubr$.

  \medskip
  
  Let us fix an element $u$ in $U'$. For all $v$ in $f^{-1}(u)$, we
  know that $J(v)\neq 0$, so, by the inverse function theorem, there
  exists a neighbourhood $E_v$ of $v$ in $V'$ such that the
  restriction $f : E_v \to f(E_v)$ is an isomorphism. Let us write
  $f^{-1}(u)=\{v_1,\dots, v_r \}$. Obviously we can suppose that the
  $E_{v_i}$'s are pairwise disjoint. Then $\Omega_u:=f(A_{v_1})\cap
  \dots \cap f(A_{v_r})$ is a neighbourhood of~$u$ in~$U'$, and for
  $x$ in $f^{-1}(\Omega_u)$ there exists a unique $i$ such that $x$ is
  in~$A_{v_i}$. Thus we get natural map $f^{-1}(\Omega_u) \to \Omega_u
  \times \{1,\dots, r\}$, $x \mapsto (f(x),i)$, which is clearly a
  homeomorphism.
\end{proof}

\subsection{Ramification indices of a branched covering}
\label{subpartbranched}

As explained in Namba's book (\cite[Ex. 1.1.2]{branched}), the map $f$
is more precisely a \emph{finite branched covering} of $U$ (according
to \cite[Def.\ 1.1.1]{branched}), of degree $r$.

\medskip

Because of the inclusion $Z(J)=\Vram \subseteq f^{-1}(\Ubr)$, we know
that the irreducible components of $f^{-1}(\Ubr)$ are the $Z(Q)$, for
$Q \in \Spram(B)$, plus possibly some other components associated to
unramified polynomials. We recall here, for the sake of completeness,
some classical properties, thanks to which the ramification indices
can be interpreted via the cardinality of the fibers.

\begin{propo}[After Namba]
  \label{propfibers}
  Let $u$ be a non-singular point of $\Ubr$, and $v\in f^{-1}(u)$. Then:
  \begin{enumerate}[(a)]
  \item $v$ is non-singular in $f^{-1}(\Ubr)$. In particular,
    there is a unique irreducible component $C_v$ of
    $f^{-1}(\Ubr)$ containing $v$.
  \item There exists a connected open neighbourhood $\Omega_v$ of $v$ such
    that, for the restriction of $f$:
    \[ \widetilde{f}: \Omega_v \to f(\Omega_v) \ , \] for each $u'$ in
    $f(\Omega_v)\cap (U-\Ubr)$, the cardinality of the fiber
    $\widetilde{f}^{-1}(u')$ is the ramification index $e_\fq$ of the ideal
    $\fq \in \Spec_1(B)$ defining $C_v$.
  \end{enumerate}
\end{propo}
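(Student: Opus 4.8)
The plan is to prove Proposition \ref{propfibers} by a local analysis near a smooth point of the branch locus, reducing as far as possible to the one-variable (single-branch) picture. First I would set up local coordinates: pick $u \in \Ubr$ smooth, so in an analytic neighbourhood $\Ubr$ is a smooth hypersurface, cut out by a single coordinate, say $u_n = 0$. Fix $v \in f^{-1}(u)$ and let $\fq = (Q)$ be a height-one prime of $B$ with $\fq \cap A = \fp = (P)$; recall from the lemma that $Q \mid P$ in $B$ and $e_\fq = v_Q(P)$. The key structural input is that $f$ is a finite branched covering (Namba, \cite[Def.\ 1.1.1]{branched}), unramified of degree $r$ over $U - \Ubr$, and that $\Vram = Z(J) = \bigcup_{Q \in \Spram(B)} Z(Q)$ together with possibly some $Z(Q')$ for unramified $Q'$ — but those extra components map \emph{into} $\Ubr$ only if they lie over $P$; in fact over a smooth point of $\Ubr$ the relevant components are exactly the $Z(Q)$ with $Q$ over $P$.

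For part (a), I would argue that the components of $f^{-1}(\Ubr)$ through $v$ are the zero sets $Z(Q_i)$ of the irreducible factors $Q_i$ of $P$ in $B$ that vanish at $v$. Smoothness of $v$ in $f^{-1}(\Ubr)$ follows because $f^{-1}(\Ubr)$ is locally $f^{-1}(\{u_n = 0\}) = Z(P \circ f)$... more precisely $Z(f_n)$ where $f_n$ is the last component of $f$, pulled back; and one needs that near $v$ only one irreducible factor of $f_n$ vanishes. This is where I expect the smoothness of $u$ in $\Ubr$ to be used crucially: if two distinct components $Z(Q_i), Z(Q_j)$ both passed through $v$, their images would both be contained in $\Ubr$, and a dimension/multiplicity count (the generic fibre over each $Z(Q_i)$ near $v$ being governed by $e_{Q_i}$, summing with the unramified sheets to $r$) would force $\Ubr$ to be singular at $u$ — essentially because the branch divisor would pick up contributions from two directions. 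I would make this precise using purity of the branch locus for the finite covering and the fact that $\Ubr$ is locally irreducible at a smooth point.

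For part (b), having isolated the unique component $C_v = Z(Q)$ through $v$, I would take a small polydisc neighbourhood $\Omega_v$ of $v$ adapted to local coordinates in which $Q$ becomes (after the Weierstrass preparation theorem) a Weierstrass polynomial in one variable over the disc, and $f$ restricted to $\Omega_v$ is, up to the unramified sheets, modeled on the standard cyclic branched cover $z \mapsto z^{e}$ in that one variable, with $e = e_\fq$. Concretely: $f(\Omega_v)$ maps onto a neighbourhood of $u$, and for $u'$ in $f(\Omega_v) \cap (U - \Ubr)$ the fibre $\widetilde f^{-1}(u')$ consists of the roots over $u'$ that specialise to $v$ as $u' \to u$; by continuity of roots and the non-vanishing of $J$ off $\Vram$, these roots are simple and their number equals the order of vanishing of the defining equation along $C_v$, i.e.\ the intersection multiplicity, which is $v_Q(P) = e_\fq$. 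Shrinking $\Omega_v$ so it meets no other sheet of the cover and no other component of $f^{-1}(\Ubr)$ ensures connectedness and that the count is exactly $e_\fq$, not more.

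The main obstacle, I expect, is part (a): rigorously ruling out that several irreducible components of $f^{-1}(\Ubr)$ meet at $v$ over a smooth point $u$ of $\Ubr$, and more generally controlling the local structure of $f^{-1}(\Ubr)$. The cleanest route is probably to invoke Namba's general theory of finite branched coverings directly — in \cite[Ch.\ 1]{branched} the local normal form of a finite branched covering over a smooth point of the branch locus is worked out, giving exactly a product of standard cyclic branched discs — so that Proposition \ref{propfibers} becomes a translation of that normal form into the language of height-one primes and ramification indices $e_\fq = v_Q(P)$. I would present the argument in that spirit, citing Namba for the local model and supplying the (short) algebraic identification of the branching order with $e_\fq$ via the valuation formula and Theorem \ref{thmjac}.
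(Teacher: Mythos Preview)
Your closing recommendation --- cite Namba's local normal form for finite branched coverings and then translate it into the language of height-one primes and ramification indices $e_\fq$ --- is exactly the paper's proof, which consists solely of a reference to Theorem~1.1.8 and Corollary~1.1.13 in \cite{branched}. Your preceding sketch (Weierstrass preparation, the cyclic model $z\mapsto z^{e}$, and the multiplicity/singularity argument for part~(a)) is sound in outline and already goes beyond what the paper itself supplies.
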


\begin{proof}
  We refer to Theorem 1.1.8 and Corollary 1.1.13 in \cite{branched}.
\end{proof}

\section{Well-ramified extensions}
\label{partwellram}
\subsection{The well-ramified property}
\label{subpartwellram}

As above we consider a finite graded polynomial extension $A \subseteq B$,
given by a morphism $f$. We denote by $J$ its Jacobian, and by $e_Q$ the
ramification index of a polynomial $Q\in\Spec_1(B)$.

\begin{defi}\label{defwellram}
We say that the extension  $A \subseteq B$, or the morphism $f$, is
\emph{well-ramified}, if:
\[ \left( J \right) \cap A = \left( \prod_{Q \in \Spram(B)} Q ^{e_Q}
\right) \ . \] In this case we will call the polynomial $\prod
Q^{e_Q}$ the \emph{discriminant} of the extension, and denote it by
$D_{B/A}$.
\end{defi}

Note that if the extension is well-ramified, the quotient
$D_{B/A}/J_{B/A}$ is exactly the product of all ramified polynomials
of the extension.

\medskip

Flagrantly, the definition above makes the second part of Thm.\
\ref{thmintrojac} a tautology. So our terminology might at this point
seem quite mysterious. Actually the remaining of this section will be
concerned with giving several equivalent characterisations of
well-ramified extensions (Prop.\ \ref{propwell}), that ought to make
this terminology (and the usefulness of this notion) much more
transparent.

\subsection{Characterisations of the well-ramified property}

Most of the characterisations given below are very elementary, but are
worth mentioning so as to get a full view of what is a well-ramified
extension. 

\begin{propo}
  \label{propwell}
  Let $A \subseteq B$ a finite graded polynomial extension, and $f : V \to
  U$ its associated morphism. The following properties are equivalent:
  \begin{enumerate}[(i)]
  \item the extension $A\subseteq B$ is well-ramified
    (as defined in \ref{defwellram});
  \item $\displaystyle{\left( \prod_{Q \in \Spram(B)} Q \right) \cap A = \left(
      \prod_{Q \in \Spram(B)} Q ^{e_Q} \right)}$;
  \item the polynomial $\prod_{Q \in \Spram(B)} Q ^{e_Q} $ lies in $A$;
  \item for any $\fp \in \Spec_1(A)$, if there exists $\fq_0 \in
    \Spec_1(B)$ over $\fp$ which is ramified, then any other $\fq \in
    \Spec_1(B)$ over $\fp$ is also ramified;
  \item if $P$ is an irreducible polynomial in $A$, then, as a polynomial
    in $B$, either it is reduced, or it is completely non-reduced, \ie, any of
    its irreducible factors appears at least twice;
  \item $f^{-1}(\Ubr )=\Vram$;
  \item $f(\Vram) \cap f(V- \Vram) = \varnothing$.
  \end{enumerate}
\end{propo}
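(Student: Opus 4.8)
The plan is to prove Proposition~\ref{propwell} by establishing a cycle of implications, organizing the seven conditions into small clusters that are each essentially immediate, then bridging the clusters with the slightly more substantial arguments. I would first dispose of the ``algebraic core'' $(i)\Leftrightarrow(ii)\Leftrightarrow(iii)$. For $(i)\Leftrightarrow(ii)$: Theorem~\ref{thmjac} gives $J_{B/A}\doteq\prod_{Q\in\Spram(B)}Q^{e_Q-1}$, so the ideal $(J)$ of $B$ equals the ideal generated by $\prod Q^{e_Q-1}$; multiplying the situation by $\prod Q$ (which is a unit issue only up to association), intersecting with $A$ commutes suitably because in a UFD the relevant divisorial computations only depend on the radical-support, and one checks directly that $(J)\cap A=(\prod Q^{e_Q-1})\cap A$ has the same $A$-ideal as $(\prod Q)\cap A$ shifted — more carefully, I would note $(\prod Q^{e_Q})\subseteq(\prod Q^{e_Q-1})\cap B$ and also $(\prod Q^{e_Q})\subseteq(\prod Q)$, and that an element of $A$ divisible by $\prod Q$ in $B$ is automatically divisible by $\prod Q^{e_Q}$ in $B$ (this is the previous Lemma: $P\doteq\prod_{Q\text{ over }P}Q^{e_Q}$, so any product of ramified $Q$'s dividing something in $A$ forces the full $e_Q$-power). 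That last observation is exactly what makes $(ii)$ work and also gives $(ii)\Leftrightarrow(iii)$ almost for free: $(iii)$ says $\prod Q^{e_Q}\in A$, and since $\prod Q^{e_Q}$ always lies in the $B$-ideal $(\prod Q)$, membership in $A$ is equivalent to it lying in $(\prod Q)\cap A$; conversely that intersection is generated by $\prod Q^{e_Q}$ precisely when the latter is in $A$. So $(iii)$ is the cleanest formulation and the others reduce to it.

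Next I would handle the ``prime-by-prime'' reformulations $(iii)\Leftrightarrow(iv)\Leftrightarrow(v)$. Condition $(v)$ is just the contrapositive-free restatement of $(iv)$ in terms of an irreducible $P\in A$ and its factorization $P\doteq\prod_{Q\text{ over }P}Q^{e_Q}$ in $B$: ``$P$ reduced'' means all $e_Q=1$, ``completely non-reduced'' means all $e_Q\geq2$, and $(iv)$ says you cannot have a mix — so $(iv)\Leftrightarrow(v)$ is purely linguistic. For $(iv)\Rightarrow(iii)$: group the ramified $Q$'s by the prime $P$ of $A$ they lie over; by $(iv)$, for each such $P$ \emph{every} $Q$ over $P$ is ramified, hence $\prod_{Q\text{ over }P}Q^{e_Q}\doteq P\in A$; taking the product over the finitely many relevant $P$ shows $\prod_{Q\in\Spram(B)}Q^{e_Q}\in A$. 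For $(iii)\Rightarrow(iv)$: suppose some $P$ has a ramified $\fq_0$ and an unramified $\fq$ over it. Then $\prod_{Q\in\Spram}Q^{e_Q}$, being a polynomial in $A$, must — after we look at which irreducible $A$-factors divide it — be divisible (in $B$) by the full $P\doteq\prod_{Q\text{ over }P}Q^{e_Q'}$ for each irreducible $P$ of $A$ dividing it, which drags in the unramified $\fq$; but $\fq$ does not divide the left side, contradiction. I would phrase this cleanly: an element of $A$ is, up to association in $B$, a product of powers of primes of $A$, each of which factors in $B$ as a full product over its $B$-primes; so the set of $Q$'s dividing an element of $A$ is a union of ``fibers'' $\{Q:Q\text{ over }P\}$, and $\Spram(B)$ being such a union is exactly $(iv)$.

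Finally the ``geometric'' equivalences $(i)/(vi)/(vii)$. I have from Section~\ref{partgeom} that $\Vram=Z(J)=\bigcup_{Q\in\Spram(B)}Z(Q)$ and that $\Vram\subseteq f^{-1}(\Ubr)$ with $f(\Vram)=\Ubr$; also $f^{-1}(\Ubr)$ has irreducible components exactly the $Z(Q)$ for $Q\in\Spram(B)$ together with possibly extra $Z(Q')$ for unramified $Q'$ lying over primes of $A$ that also carry a ramified prime. Hence $f^{-1}(\Ubr)=\Vram$ iff no such extra component occurs, i.e.\ iff no unramified $Q'$ lies over a prime $P$ of $A$ that also has a ramified $Q$ over it — which is exactly $(iv)$. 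That gives $(iv)\Leftrightarrow(vi)$. For $(vi)\Leftrightarrow(vii)$: $f(V-\Vram)\cap f(\Vram)=\varnothing$ means every point whose fiber meets $\Vram$ has its \emph{whole} fiber in $\Vram$, equivalently $f^{-1}(f(\Vram))=\Vram$; since $f(\Vram)=\Ubr$ this is $f^{-1}(\Ubr)=\Vram$, i.e.\ $(vi)$. (One direction is trivial since $\Vram\subseteq f^{-1}(\Ubr)=f^{-1}(f(\Vram))$ always.) This closes the cycle, say $(i)\Rightarrow(ii)\Rightarrow(iii)\Rightarrow(iv)\Rightarrow(v)\Rightarrow(iv)\Rightarrow(vi)\Rightarrow(vii)\Rightarrow(vi)\Rightarrow(iii)\Rightarrow(i)$, reorganized into a clean linear chain.

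The main obstacle I anticipate is the bookkeeping in the algebraic equivalences $(i)\Leftrightarrow(ii)\Leftrightarrow(iii)$ and $(iii)\Leftrightarrow(iv)$: the repeated passage between ``ideal of $B$'', ``ideal of $A$'', ``intersection with $A$'', and ``membership in $A$'' is conceptually trivial but easy to get subtly wrong, and the key leverage point each time is the earlier Lemma $P\doteq\prod_{Q\text{ over }P}Q^{e_Q}$, which must be invoked precisely to say ``divisibility by one ramified $Q$ over $P$, inside an element of $A$, forces divisibility by the whole $P$''. I would make sure to state that once, as an explicit sublemma (``for $a\in A$ and $Q$ over $P$, $Q\mid a$ in $B$ $\Rightarrow$ $P\mid a$ in $A$''), and then the rest of the proof is a sequence of short, mechanical verifications. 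The geometric part should be genuinely easy given Proposition~\ref{proprambr} and the component description preceding Proposition~\ref{propfibers}.
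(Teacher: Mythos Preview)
Your proposal is correct and follows essentially the same route as the paper: both hinge on the observation that $(J)\cap A=(\prod_{Q\in\Spram(B)}Q)\cap A$ always holds (the paper packages this via the explicit generator $\widetilde{S}=\lcm(\widetilde{Q})$, you via the sublemma ``$Q\mid a$ in $B$ and $a\in A$ forces $P\mid a$ in $A$''), then handle $(iii)\Leftrightarrow(iv)\Leftrightarrow(v)$ by grouping ramified primes over their contractions, and close the geometric loop using Proposition~\ref{proprambr}. The only cosmetic difference is that the paper bridges geometry to algebra through $(i)\Rightarrow(vii)$ and $(vi)\Rightarrow(v)$, whereas you go directly through $(iv)\Leftrightarrow(vi)$ via the component description of $f^{-1}(\Ubr)$; both are equally short.
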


\begin{proof}
  Let $S:=\prod_{Q \in \Spram(B)} Q $, and $R:=\prod_{Q \in \Spram(B)} Q
  ^{e_Q}$. From Thm.\ \ref{thmjac}, we have also: $J = \prod_{Q \in \Spram(B)} Q
  ^{e_Q-1}$.
  
  We begin with some general elementary facts. We have $(S)\cap
  A=\bigcap_{Q \in \Spram(B)} (Q)\cap A$. For $Q \in \Spram(B)$,
  denote by $\widetilde{Q}$ an irreducible in $A$ such that $Q$ lies
  over $\widetilde{Q}$ (\ie {$(Q)\cap A = \widetilde{Q}$}). As we work
  in UFDs, we get that $(S)\cap A$ is principal, generated by
  \[ \widetilde{S} := \lcm \left(\widetilde{Q} \tq Q\in \Spram(B)\right)\
  .\]

  For $Q \in \Spram(B)$, $Q^{e_Q}$ divides $\widetilde{Q}$, so $R$ divides
  $\widetilde{S}$. Moreover, $J$ divides $R$, so: \\${\widetilde{S} \subseteq
    (R)\cap A \subseteq (J)\cap A}$. Conversely, $S$ divides $J$, so:
  $(J)\cap A \subseteq (S)\cap A= (\widetilde{S})$.
  
  Thus we always have $(J)\cap A = (S)\cap A$, and the statement $(ii)$ is
  just an alternate definition of the well-ramified property:
  $(i)\Leftrightarrow (ii)$. 

  \medskip

  $(iii)\Leftrightarrow (ii)$: we have $(S)\cap A \subseteq (R)$ and $R\in
  (S)$. So $R$ lies in $A$ if and only if $(S)\cap A =(R)$.

  \medskip 

  $(v)\Leftrightarrow (iv)$, since $(v)$ is just a ``polynomial''
  rephrasing of $(iv)$.

  \medskip
  
  $(iv)\Leftrightarrow (iii)$: let us denote by
  $\Spram(A)$ the set of primes $\fp$ in $\Spec_1(A)$ such that there
  exists at least one prime $\fq$ over $\fp$ which is ramified. Then:
  \begin{equation*} R=\prod_{Q \in \Spram(B)} Q ^{e_Q} =\prod_{P \in \Spram(A)}
  \prod_{\substack{Q \in \Spram(B)\\Q \text{ over }P}} Q
  ^{e_Q} \ .\tag{*} \end{equation*} 
  If we suppose $(iv)$, then whenever $P$ is in $\Spram(A)$,
  all the $Q$ over $P$ are ramified. Thus:
  \[R= \prod_{P \in \Spram(A)} \ \prod_{Q \text{\ over\ } P} Q ^{e_Q} =
  \prod_{P \in \Spram(A)} P \] and the polynomial $R$ lies in $A$.

  Conversely, suppose that $R$ lies in $A$. Consider $P$ in $\Spram(A)$,
  and $Q$ in $\Spec_1(B)$ lying over~$P$. Then there exists $Q_0$ in
  $\Spram(B)$ such that $(Q_0)\cap A = (P)=(Q)\cap A$. As $(R)$ is
  contained in $(Q_0)\cap A$, we obtain that $Q$ also divides $R$, so is
  among the factors of the product (*) above. Thus $(Q)$ is ramified, and
  $(iv)$ is verified.

  \medskip
  
  $(vii)\Rightarrow (vi)$: if $v$ lies in $f^{-1}(\Ubr)$, we have
  $f(v)\in \Ubr=f(\Vram)$, so $(vii)$ implies that $v\in
  \Vram$. Thus $f^{-1}(\Ubr) \subseteq
  \Vram$. The other inclusion is from Prop.\ \ref{proprambr}.

  \medskip
  
  $(i) \Rightarrow (vii)$: we have $(\widetilde{S})=(J)\cap A$, so that
  $Z(\widetilde{S}) = f(Z(J))$. If the extension is well-ramified, we
  obtain: $\widetilde{S}\doteq \prod_{Q \in \Spram(B)} Q^{e_Q}$. Thus $J$
  and $\widetilde{S}$ have the same irreducible factors in $B$, which
  implies $Z(J)=f^{-1} (Z(\widetilde{S}))$. So
  $f(\Vram)=Z(\widetilde{S})$, whereas $f(V- \Vram)=U -
  Z(\widetilde{S})$.

  \medskip

  $(vi)\Rightarrow (v)$: suppose that there exists $Q$ in $\Spram(B)$,
  such that $\widetilde{Q}$ has one irreducible factor (in $B$) which is not a
  ramified polynomial, say $M$. Then we can choose $v$ in $Z(M)-Z(J)$. As
  $M(v)=0$, we have $\widetilde{Q}(f(v))=0$ and $\widetilde{S}(f(v))=0$. So
  $f(v)\in Z(\widetilde{S})=f(Z(J))=\Ubr$, which contradicts $(i)$.
\end{proof}

\subsection{Examples and counterexamples}

A fundamental case when the extension is well-ramified is the Galois
case. If $A=B^G$, with $G$ a (reflection) group, all the ramification
indices of the ideals over a prime of $A$ are the same. In our setting, the
well-ramified property is somewhat a weak version of the normality (the
extension is always separable since we work in characteristic zero).

\medskip

Of course the notion is strictly weaker than being a Galois
extension. Take for example the extension $\BC[X^2+Y^3,X^2Y^3]
\subseteq \BC[X,Y]$, with Jacobian $J=6XY^2(X^2-Y^3)$. The ramified
polynomials are $(X)$ (index $2$) and $(Y)$ (index $3$), both above
$(X^2Y^3)$, and $(X^2-Y^3)$ (index $2$), above $((X^2-Y^3)^2)$. So
this extension is well-ramified but not Galois.

\bigskip

A simple example of a not well-ramified extension is:
\[A=\BC[X^2 Y,X^2 +Y] \subseteq \BC[X,Y]=B,\] which is free of rank $4$.
Here the ideal $(X^2 Y)$ in $A$ has two ideals above in $B$: $(X)$ which
is ramified and $(Y)$ which is not, so the extension is not
well-ramified. We compute $\theta_{B/A}=\Jac(f)=X(Y-X^2)=S$ (using the
notations of the proof of Prop.\ \ref{propwell}). So $R=X^2
(Y-X^2)^2$ is not in $A$, actually $(S)\cap A$ is generated by $X^2 Y
(Y-X^2)^2$. 

\subsection{Applications to the Lyashko-Looijenga morphisms}

For any (well-generated) complex reflection group, Bessis introduced
in \cite{BessisKPi1} a morphism called the Lyashko-Looijenga morphism
($\LL$). It gives rise to a finite graded polynomial extensions as
defined in Def.\ \ref{defext}. In \cite{submax}, using the
characterisations of Prop.\ \ref{propwell}, we prove that the 
extensions $\LL$ are always well-ramified. In particular, this implies that
the quotient $D_{\LL}/J_{\LL}$ of the $\LL$-discriminant over the
$\LL$-jacobian is the product of the ramified polynomials of the
extension $\LL$. This is an important structural property, that is
used in \cite{submax} to derive new combinatorial results about
certain factorisations of a Coxeter element in complex reflection
groups.

\subsection*{Acknowledgements.}
This work is part of my PhD thesis \cite{These}. I would like to thank
my advisor David Bessis for his ceaseless support and his help during
this period.

Part of this work has been carried out during a stay at Oxford
Mathematical Institute in February 2009, where I was supported by a
grant of the network ``Representation Theory Across the Channel''. I
thank Bernard Leclerc and Meinolf Geck who administer this grant. In
Oxford I would like to thank all the members of the Algebra Department
for their hospitality, and in particular Raphaël Rouquier for fruitful
discussions, and for suggesting me the proof of Prop. \ref{propdiff}.

I am also grateful to José Cogolludo for his lectures and for
indicating to me Namba's book \cite{branched} about branched
coverings.

\bibliographystyle{alpha}
\bibliography{totalbibli}

\end{document}